\newtheorem{thrm}{Theorem}[section]
\newtheorem{cor}[thrm]{Corollary}
\newtheorem{lem}[thrm]{Lemma}
\newtheorem{prop}[thrm]{Proposition}
\theoremstyle{definition}
\newtheorem{rem}[thrm]{Remark}
\crefname{thrm}{Theorem}{Theorems}
\crefname{lem}{Lemma}{Lemmas}
\crefname{cor}{Corollary}{Corollaries}
\crefname{prop}{Proposition}{Propositions}
\crefname{defn}{Definition}{Definitions}
\crefname{exm}{Example}{Examples}
\crefname{rem}{Remark}{Remarks}
\crefname{section}{Section}{Sections}
\crefname{equation}{\unskip}{\unskip}
\crefname{enumi}{\unskip}{\unskip}
\newcommand{\I}{I(X,R,\sigma)}
\begin{document}

\title[Skew Incidence Rings and the Isomorphism Problem]{Skew Incidence Rings and the Isomorphism Problem}

\author{\'Erica Z. Fornaroli}
\address{Departamento de Matem\'atica, Universidade Estadual de Maring\'a, Maring\'a--PR, CEP: 87020--900, Brazil}
\email{ezancanella@uem.br}

\begin{abstract}
	Let $X$ be a finite partially ordered set, $R$ an associative unital ring and $\sigma$ an endomorphism of $R$. We describe some properties of the skew incidence ring $\I$ such as invertible elements, idempotents, the Jacobson radical and the center. Moreover, if the skew incidence rings $\I$ and $I(Y,S,\tau)$ are isomorphic and the only idempotents of $R,S$ are the trivial ones, we show that the partially ordered sets $X$ and $Y$ are isomorphic.
\end{abstract}

\subjclass[2010]{Primary 16S50, 16S60; Secondary 16U60, 16N20, 16U70, 16U99}

\keywords{units, idempotents, Jacobson radical, center, isomorphisms}

\maketitle

\section*{Introduction}\label{intro}

Let $(X,\le)$ be a finite partially ordered set (finite poset, for short). It is known that $X$ can be labeled $X=\{x_1,\ldots, x_n\}$ such that $x_i\leq x_j$ implies $i\leq j$ (see ~\cite[Lemma 1.2.5]{SpDo}). If $x,y\in X$, $x\le y$ and $x\neq y$, we will just write $x<y$. Let $R$ be an associative ring with identity $1$ and let $\sigma: R\to R$ be an endomorphism such that $\sigma(1)=1$. The \emph{skew incidence ring $\I$ of $X$ over $R$ with respect to $\sigma$} is the set of functions $f: X\times X\to R$, such that $f(x,y)=0$ if $x\not\le y$, with the natural structure of a left $R$-module and the product defined by
\begin{align}\label{convolution}
(fg)(x_i,x_j)&=\sum_{x_i\le x_k\le x_j}f(x_i,x_k)\sigma^{k-i}(g(x_k,x_j)),
\end{align}
for any $f,g\in \I$ and $x_i\le x_j$ in $X$. The ring $\I$ is associative with identity $\delta$ given by $\delta(x,y)=1$ if $x=y$ and $\delta(x,y)=0$ if $x\neq y$ (see \cite{ezF}). Note that if $\sigma=Id_R$ is the identity endomorphism, then $\I=I(X,R)$ is the incidence ring of $X$ over $R$, and if $\sigma$ is an automorphism, then $I(X,R)\cong\I$ via $f\mapsto h$ where $h(x_i,x_j)=\sigma^{1-i}(f(x_i,x_j))$. In the case that $X=\{1,\ldots,n\}$ with the usual order, then $\I=UT_n(R,\sigma)$ is the skew triangular matrix ring defined by Chen et al. ~\cite{CYZ}. Note that $UT_2(R,\sigma)$ coincides with the formal triangular matrix ring $\begin{pmatrix} R & M\\ 0 & R\end{pmatrix}$ where ${}_RM={}_RR$ with $mr = m\sigma(r)$ for $m \in M$, $r\in R$. Skew triangular matrix rings have been investigated by many authors (see \cite{HM,HMA,NM,N1,N2,P}, for instance).

In \cref{sec-properties}, we describe the units of $\I$ and as a consequence we obtain its Jacobson radical. We also describe the idempotents and primitive idempotents of $\I$. Such results are applied in \cref{sec-isomorphism} to obtain a positive answer for the isomorphism problem: if the skew incidence rings $\I$ and $I(Y,S,\tau)$ are isomorphic and the only idempotents of $R,S$ are the trivial ones, then the partially ordered sets $X$ and $Y$ are isomorphic. Such result was proved by Voss in \cite{Voss} in case $R=S$, $\sigma=\tau=Id_R$ and $X,Y$ locally finite sets. Further references to the isomorphism problem for incidence rings and their generalizations are \cite{AHD,Khripchenko-Novikov09,Khripchenko10,SpDo}. We also provide a sufficient condition for skew incidence rings to be isomorphic.

\section{Properties of skew incidence rings}\label{sec-properties}

From now on, $R$ is an associative ring with identity, $\sigma: R\to R$ is an endomorphism and $X=\{x_1,\ldots, x_n\}$ is a poset such that $x_i\leq x_j$ implies $i\leq j$.

The equalities \cref{in_diagonal,af_xye_xy-cdot-bt_uve_uv,e_xye_zw,e_x-f-e_y} below are from \cite{ezF} and can be easily verified. Given $f,g\in \I$, we have
\begin{align}\label{in_diagonal}
(fg)(x,x)=f(x,x)g(x,x),
\end{align}
for all $x\in X$.

Since $X$ is finite, $\I$ is a free left $R$-module spanned by the set $\{e_{xy} : x\leq y\}$, where $e_{xy}(x,y)=1$ and $e_{xy}(u,v)=0$ if
$(u,v)\neq(x,y)$. We will also denote $e_{x}=e_{xx}$. For all $r,s\in R$,
\begin{align}\label{af_xye_xy-cdot-bt_uve_uv}
(re_{x_ix_j})(se_{x_kx_l})=
 \begin{cases}
  r\sigma^{j-i}(s)e_{x_ix_l}, & \mbox{if $j=k$},\\
  0, & \mbox{otherwise}.
 \end{cases}
\end{align}
In particular,
\begin{align}\label{e_xye_zw}
e_{xy}e_{zw}=\delta_{yz}e_{xw},
\end{align}
where $\delta_{yz}$ is the Kronecker delta. It follows that the elements $e_x$, $x\in X$, are pairwise orthogonal idempotents of $\I$.

We also have, for any $f\in \I$,
\begin{align}\label{e_x-f-e_y}
 e_xf e_y=\begin{cases}
          f(x,y)e_{xy}, & \mbox{ if }x\le y,\\
          0, & \mbox{ otherwise}.
         \end{cases}
\end{align}

Given $x,y\in X$, we recall that the \emph{interval} from $x$ to $y$ is the set $\{z\in X : x\leq z\leq y\}$ and is denoted by $[x,y]$. An interval $[x,y]$ is said to have \emph{length} $m$ if there is a chain with $m$ elements in $[x,y]$, and any chain in $[x,y]$ has at most $m$ elements. The length of $[x,y]$ will be denoted by $|[x,y]|$.

\begin{prop}\label{invertible}
An element $f\in \I$ is invertible if, and only if, $f(x,x)$ is invertible in $R$ for all $x\in X$.
\end{prop}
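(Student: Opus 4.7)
The forward direction is essentially immediate from \cref{in_diagonal}: if $fg = \delta = gf$, then evaluating at $(x,x)$ gives $f(x,x)g(x,x) = 1 = g(x,x)f(x,x)$ for every $x \in X$, so each $f(x,x)$ is a unit of $R$.

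For the converse I plan to decompose $f = d + n$, where $d$ is the ``diagonal part'' defined by $d(x,x) = f(x,x)$ and $d(x,y) = 0$ for $x \neq y$, and $n = f - d$. Two observations then drive the argument. First, by \cref{in_diagonal} the set $N = \{g \in \I : g(x,x) = 0 \text{ for all } x \in X\}$ is a two-sided ideal of $\I$; iterating \cref{convolution} shows that for $g_1,\dots,g_m \in N$ the value $(g_1\cdots g_m)(x_i,x_j)$ is a sum indexed by strict chains $x_i < x_{k_1} < \cdots < x_{k_{m-1}} < x_j$, so if $L$ denotes the maximum length of a chain in $X$ then $N^{L} = 0$. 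The twist by $\sigma$ plays no role in this support analysis, since the powers of $\sigma$ only rescale coefficients at each pair without enlarging supports. Second, the diagonal elements form a unital subring $D$ of $\I$, and by \cref{in_diagonal} its multiplication is simply pointwise multiplication of the diagonals---no power of $\sigma$ appears, because $\sigma^{k-i} = \mathrm{id}$ whenever $k = i$. Hence $D \cong R \times \cdots \times R$ ($|X|$ copies), and $d$ is invertible in $D$ precisely when each $f(x,x)$ is a unit of $R$.

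Granted the hypothesis, let $d^{-1} \in D$ be given by $d^{-1}(x,x) = f(x,x)^{-1}$. Write $f = d(\delta + d^{-1}n)$. Since $N$ is a right ideal, $d^{-1}n \in N$ is nilpotent, so $\delta + d^{-1}n$ has the two-sided inverse $\sum_{k=0}^{L-1}(-d^{-1}n)^{k}$; multiplying by $d^{-1}$ on the right yields the inverse of $f$. The only step requiring genuine verification is the nilpotency of $N$, which is a standard chain-length/support computation undisturbed by the skew convolution, so I do not anticipate any serious obstacle.
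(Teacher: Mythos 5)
Your argument is correct, but it takes a genuinely different route from the paper's. The paper proves the converse by explicitly constructing a right inverse $g$ (and, symmetrically, a left inverse $h$) by induction on the length of the intervals $[x_i,x_j]$, solving the recursion $g(x_i,x_j)=-f(x_i,x_i)^{-1}\sum_{x_i<x_k\le x_j}f(x_i,x_k)\sigma^{k-i}(g(x_k,x_j))$ and then invoking the coincidence of one-sided inverses. You instead split $f=d+n$ into its diagonal part plus an element of $N=\{g\in\I : g(x,x)=0 \text{ for all } x\in X\}$, observe via \cref{in_diagonal} that $N$ is a two-sided ideal, show by a chain-support analysis of the iterated product \cref{convolution} that $N^L=0$ for $L$ the maximal size of a chain in $X$, and invert $\delta+d^{-1}n$ by a finite geometric series. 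All the steps check out: $\sigma$ is a ring endomorphism, so it sends $0$ to $0$ and cannot enlarge supports, and the diagonal elements do form a subring isomorphic to $R^{|X|}$ with pointwise multiplication, since only $\sigma^{0}=\mathrm{id}$ occurs on the diagonal. (One terminological slip: $d^{-1}n\in N$ because $N$ is a \emph{left} ideal, not a right one, but you established the two-sided statement anyway, so this is harmless.) Your approach buys a cleaner structural picture --- $\I$ modulo the nilpotent ideal $N$ is $R^{|X|}$ --- which in fact yields the paper's Jacobson-radical corollary almost for free; the paper's approach is more elementary and hands-on, and produces an explicit recursive formula for the inverse.
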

\begin{proof}
If there is $g\in \I$ such that $fg=gf=\delta$, then by \eqref{in_diagonal},
$$f(x,x)g(x,x)=(fg)(x,x)=\delta(x,x)=1=\delta(x,x)=(gf)(x,x)=g(x,x)f(x,x),$$
for all $x\in X$.

On the other hand, suppose that $f(x,x)$ is invertible in $R$ for all $x\in X$. Note that if there is $g\in \I$ satisfying
\begin{enumerate}
\item [(i)] $g(x_i,x_i)=f(x_i,x_i)^{-1}$ for all $x_i\in X$;
\item [(ii)] $g(x_i,x_j)=-f(x_i,x_i)^{-1}\displaystyle\sum_{x_i< x_k\le x_j}f(x_i,x_k)\sigma^{k-i}(g(x_k,x_j))$ for all $x_i<x_j$ in $X$,
\end{enumerate}
then $g$ will be a right inverse of $f$, by \cref{convolution,in_diagonal}. We define such $g\in \I$ inductively on the length of the intervals of $X$ as follows. Let $x_i,x_j\in X$. If $|[x_i,x_j]|=0$, then $x_i\not\le x_j$ and we define $g(x_i,x_j)=0$. If $|[x_i,x_j]|=1$, then $x_i=x_j$ and we define $g(x_i,x_i)=[f(x_i,x_i)]^{-1}$. Suppose that $|[x_i,x_j]|=m>1$ and that for each interval of length less than $m$ the function $g$ has been defined on that interval. For each $x_k\in X$ such that $x_i<x_k\leq x_j$, we have $|[x_k,x_j]|<|[x_i,x_j]|=m$ and, therefore, $g(x_k,x_j)$ has already been defined. Thus, all summands on the right hand side of (ii) has been defined and then $g(x_i,x_j)$ is defined.

Analogously, we can define $h\in \I$ satisfying
\begin{enumerate}
\item [(iii)] $h(x_i,x_i)=f(x_i,x_i)^{-1}$ for all $x_i\in X$;
\item [(iv)] $h(x_i,x_j)=\left[-\displaystyle\sum_{x_i\le x_k< x_j}h(x_i,x_k)\sigma^{k-i}(f(x_k,x_j))\right]\sigma^{j-i}(f(x_j,x_j))^{-1}$ for all $x_i<x_j$ in $X$,
\end{enumerate}
and such $h$ will be a left inverse of $f$. Thus $h=g$ and, therefore, $fg=gf=\delta$.
\end{proof}

For any ring $S$, we will denote the Jacobson radical of $S$ by $J(S)$.

\begin{cor}
$J(\I)=\{f\in \I :  f(x,x)\in J(R) \text{ for all } x\in X\}$.
\end{cor}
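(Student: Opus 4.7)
The plan is to combine Proposition \ref{invertible}, which reduces invertibility in $\I$ to invertibility of the diagonal entries, with the standard characterization that $x \in J(S)$ iff $1-yx$ is invertible in $S$ for every $y \in S$. Writing $N = \{f \in \I : f(x,x) \in J(R) \text{ for all } x \in X\}$, I would verify the two inclusions separately.

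For $N \subseteq J(\I)$, I would take $f \in N$ and an arbitrary $g \in \I$. Formula \cref{in_diagonal} gives $(\delta - gf)(x,x) = 1 - g(x,x)f(x,x)$, which lies in $1 - J(R)$ since $J(R)$ is an ideal, and is therefore invertible in $R$ at every $x$. Proposition \ref{invertible} then promotes this pointwise invertibility to invertibility of $\delta - gf$ in $\I$, so $f \in J(\I)$.

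For $J(\I) \subseteq N$, fix $f \in J(\I)$ and $x \in X$. Since $J(\I)$ is a two-sided ideal, \cref{e_x-f-e_y} gives $f(x,x)e_x = e_x f e_x \in J(\I)$. For any $r \in R$, the product formula \cref{af_xye_xy-cdot-bt_uve_uv} (with $i=j=k=l$, so that $\sigma^{j-i}$ is the identity) yields $re_x \cdot f(x,x)e_x = rf(x,x)e_x$, so $\delta - rf(x,x)e_x$ is a unit of $\I$. Applying Proposition \ref{invertible} at the diagonal position $x$ shows that $1 - rf(x,x)$ is a unit in $R$ for every $r \in R$, which by the characterization above means $f(x,x) \in J(R)$.

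There is no real obstacle here: the argument is essentially a translation of the unit criterion through \cref{in_diagonal,e_x-f-e_y,af_xye_xy-cdot-bt_uve_uv}. The only mildly delicate point is the second inclusion, where one must sandwich $f$ between the idempotents $e_x$ and then probe from the left with $re_x$ in order to isolate a single diagonal entry and recover the defining property of $J(R)$.
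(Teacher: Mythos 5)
Your proof is correct and follows essentially the same route as the paper: both directions come down to Proposition \ref{invertible} together with the unit characterization of the Jacobson radical, read off on the diagonal via \cref{in_diagonal}. The only cosmetic difference is in the inclusion $J(\I)\subseteq N$, where you sandwich with $e_x$ and probe with $re_x$, while the paper probes directly with $a\delta$ and $b\delta$ and uses the two-sided criterion $1-af(x,x)b$; both work equally well.
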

\begin{proof}
Let $f\in J(\I)$. Then $\delta-gfh$ is invertible in $\I$ for all $g,h\in \I$, that is, $1-g(x,x)f(x,x)h(x,x)$ is invertible in $R$ for all $x\in X$, for all $g,h\in \I$, by \cref{invertible} and equality \cref{in_diagonal}. Let $a,b\in R$ and consider $g=a\delta$ and $h=b\delta$. Then $1-af(x,x)b=1-g(x,x)f(x,x)h(x,x)$ is invertible in $R$ for all $x\in X$ and any $a,b\in R$. Therefore, $f(x,x)\in J(R)$ for all $x\in X$.

On the other hand, let $f\in\I$ such that $f(x,x)\in J(R)$ for all $x\in X$. Then, given $g,h\in \I$, the element $(\delta-gfh)(x,x)=1-g(x,x)f(x,x)h(x,x)$ of $R$ is invertible for all $x\in X$. By \cref{invertible}, $\delta-gfh$ is invertible in $\I$ for all $g,h\in \I$ and, therefore, $f\in J(\I)$.
\end{proof}

The following results describe the idempotents and primitive idempotents of $\I$.

An element $f\in \I$ will be called \emph{diagonal} if $f(x,y)=0$ for $x\neq y$.

\begin{rem}\label{diagonal_idempotent}
Let $f\in \I$. If $f$ is idempotent, then $f(x,x)$ is idempotent for all $x\in X$, by \cref{in_diagonal}. On the other hand, if $f$ is diagonal and $f(x,x)$ is idempotent for all $x\in X$, then $f$ is idempotent, by \cref{convolution,in_diagonal}.
\end{rem}

\begin{prop}\label{idempotent}
Each idempotent $f\in \I$ is conjugate to the diagonal idempotent $e$, such that $e(x,x)=f(x,x)$ for all $x\in X$.
\end{prop}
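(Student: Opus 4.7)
I would prove this by induction on $n = |X|$. The base case $n = 1$ is trivial, since $\I \cong R$ and every idempotent of $R$ coincides with its own diagonal.

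For the inductive step, let $x_n$ be the last element of the labeling (hence maximal in $X$) and set $X' = X \setminus \{x_n\}$. Maximality of $x_n$ ensures that, for any $x_i, x_j \in X'$, every $x_k$ with $x_i \le x_k \le x_j$ lies in $X'$, so \cref{convolution} restricts cleanly and $f|_{X' \times X'}$ is an idempotent of $I(X', R, \sigma)$. By induction there is an invertible $u' \in I(X', R, \sigma)$ conjugating $f|_{X' \times X'}$ to $e|_{X' \times X'}$. Lift $u'$ to $u_1 \in \I$ by retaining its values on $X' \times X'$ and setting $u_1(x_n, x_n) = 1$ together with $u_1(x_i, x_n) = 0$ for $i < n$; by \cref{invertible} $u_1$ is invertible, with inverse defined analogously from $u'^{-1}$. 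A direct computation, using that the relevant convolutions do not involve $x_n$, shows $g := u_1^{-1} f u_1$ agrees with $e$ on $X' \times X'$ and at $(x_n, x_n)$; only the entries $g(x_i, x_n)$ with $i < n$ may still differ from $e$.

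To kill these, I would apply a second conjugation $u_2 := \delta + w$ with $w$ supported on $\{(x_i, x_n) : i < n\}$. Any composition of two non-zero $w$-entries would have to factor through $x_n$, and $w(x_n, \cdot) = 0$, so $w^2 = 0$ and $u_2^{-1} = \delta - w$. Fix $i < n$ and write $a = f(x_i, x_i)$, $c = \sigma^{n-i}(f(x_n, x_n))$, $b = g(x_i, x_n)$, $v = w(x_i, x_n)$. Expanding $u_2^{-1} g u_2$ and using that $g$ is diagonal on $X' \times X'$ yields $(u_2^{-1} g u_2)(x_i, x_n) = b + av - vc$, with every other entry of $g$ preserved. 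I would then set $v := bc - ab$.

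The only substantive computation, and the main obstacle, is verifying $vc - av = b$ for this $v$. Here $g^2 = g$ evaluated at $(x_i, x_n)$, together with the diagonal shape of $g$ on $X' \times X'$, forces the Peirce-type relation $ab + bc = b$. Left-multiplying by the idempotent $a$ yields $abc = 0$, and then $(bc - ab)c - a(bc - ab) = bc + ab - 2abc = b$, as required. Hence $u_2^{-1} g u_2 = e$, and $u := u_1 u_2$ satisfies $u^{-1} f u = e$, closing the induction.
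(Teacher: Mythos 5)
Your argument is correct, but it is genuinely different from the one in the paper. You induct on $|X|$, peel off a maximal element $x_n$ (correctly using maximality so that the convolution \cref{convolution} restricts to a surjective ring homomorphism onto $I(X\setminus\{x_n\},R,\sigma)$), and then clean up the last column by a unipotent conjugation $\delta+w$ with $w^2=0$; the key Peirce-type computation checks out: $g^2=g$ at $(x_i,x_n)$ gives $ab+bc=b$ with $a^2=a$, hence $abc=0$, and then $v=bc-ab$ satisfies $vc-av=bc+ab-2abc=b$, which kills the off-diagonal entry. (Two small points worth making explicit if you write this up: $c=\sigma^{n-i}(f(x_n,x_n))$ is again an idempotent because $\sigma$ is a ring endomorphism, and there are no entries of the form $(x_n,x_j)$ with $x_j\neq x_n$ to worry about precisely because $x_n$ is maximal.) The paper instead produces a single explicit conjugating unit in one stroke: with $g=f-e$ and $h=\delta+(2e-\delta)g$, one has $h(x,x)=1$ for all $x$, so $h$ is invertible by \cref{invertible}, and a two-line computation gives $hf=ef=eh$, whence $f=h^{-1}eh$. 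The paper's proof is shorter, needs no ordering of $X$ beyond \cref{invertible}, and adapts verbatim to locally finite posets; your proof is more laborious but more transparent about the mechanism (column-by-column elimination of the off-diagonal part) and makes the conjugator completely explicit entry by entry, at the cost of an induction and of tracking the skew twist $\sigma^{n-i}$ through the last column.
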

\begin{proof}
Let $g=f-e$ and $h=\delta+(2e-\delta)g$. If $x\in X$, then $g(x,x)=0$ and, therefore, $h(x,x)=1$. Thus $h$ is invertible, by \cref{invertible}. We have
\begin{align*}
hf=& [\delta+(2e-\delta)g]f=f+(2e-\delta)(f-e)f=f+(2e-\delta)(f-ef)\\
  =& f+2ef-2ef-f+ef=ef
\end{align*}
and
$$eh= e[\delta+(2e-\delta)g]=e+2eg-eg=e+eg=e(e+g)=ef.$$
So, $hf=eh$ and then $f=h^{-1}eh$.
\end{proof}

We recall that an idempotent $e\neq 0$ is \emph{primitive}, if $ef=fe=f$ for some idempotent $f$ implies that $f=0$ or $f=e$.

\begin{prop}\label{primitive}
An idempotent $f\in\I$ is primitive if, and only if, it is conjugate to $ae_x$ for some primitive idempotent $a\in R$ and some $x\in X$. Moreover, such $x$ is unique. If $R$ is commutative or if the only idempotents of $R$ are $0$ and $1$, then $a$ is also unique.
\end{prop}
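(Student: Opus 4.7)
The plan is to use \cref{idempotent} to reduce to the diagonal case: since conjugation preserves primitivity, $f$ is primitive if and only if the diagonal idempotent $e$ associated to $f$ by \cref{idempotent} is primitive. Writing $e=\sum_{x\in X} a_x e_x$ with each $a_x=e(x,x)$ an idempotent of $R$ (as given by \cref{diagonal_idempotent}), I would first show that when $e$ is primitive, exactly one $a_x$ is nonzero and that this $a_x$ is primitive in $R$. For this, given any $y$ with $a_y\ne 0$, the element $a_y e_y$ is again a diagonal idempotent by \cref{diagonal_idempotent}, and \cref{af_xye_xy-cdot-bt_uve_uv} yields $(a_y e_y)\,e=a_y e_y=e\,(a_y e_y)$, so $a_y e_y$ is a sub-idempotent of $e$, forcing $a_y e_y=e$ by primitivity. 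A parallel orthogonal-decomposition argument applied within $R$ (splitting $a_y=b+c$ would produce nontrivial sub-idempotents $be_y,ce_y$ of $e$) then forces $a_y$ to be primitive in $R$.

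For the converse, I would identify the corner ring explicitly: computations with \cref{e_x-f-e_y,af_xye_xy-cdot-bt_uve_uv} give $(ae_x)\,g\,(ae_x)=a\,g(x,x)\,a\cdot e_x$ for every $g\in\I$, so $(ae_x)\,\I\,(ae_x)\cong aRa$ as rings via $(ae_x)g(ae_x)\mapsto a\,g(x,x)\,a$. Since an idempotent $a\in R$ is primitive if and only if $aRa$ contains no idempotents other than $0$ and $a$, this yields that $ae_x$ is primitive in $\I$. Combined with the previous paragraph and \cref{idempotent}, this proves the first assertion.

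For the uniqueness of $x$, my plan is to pass to $\I/J(\I)$. By the preceding corollary, the map $f\mapsto(\overline{f(x_i,x_i)})_{i=1}^{n}$ induces an isomorphism $\I/J(\I)\cong(R/J(R))^{n}$. Under this, $ae_x$ maps to the tuple whose only possibly nonzero entry is $\bar a$ at position $x$, and $\bar a\ne 0$ since a nonzero idempotent cannot lie in $J(R)$; similarly for $be_y$. Units of a product of rings are exactly componentwise units, so conjugation acts componentwise, and comparing supports of conjugate tuples immediately forces $x=y$.

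For the uniqueness of $a$, the case where $R$ has only trivial idempotents is immediate since $1$ is then the unique primitive idempotent of $R$. In the commutative case, from $h^{-1}(ae_x)h=be_x$, or equivalently $(ae_x)h=h(be_x)$, I would evaluate both sides at $(x,x)$ via \cref{convolution} (the sum collapses on each side to the single term $k=x$) to obtain $a\,h(x,x)=h(x,x)\,b$; since $h(x,x)$ is a unit of $R$ by \cref{invertible}, commutativity yields $a=b$. I expect the main technical step to be the corner-ring identification $(ae_x)\,\I\,(ae_x)\cong aRa$; once this is in place, the remaining assertions follow from standard idempotent and radical manipulations.
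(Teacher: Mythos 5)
Your proof is correct, and its skeleton matches the paper's: reduce to the diagonal case via \cref{idempotent}, characterize the diagonal primitive idempotents, and then settle uniqueness by evaluating on the diagonal. The differences are in how two key steps are packaged. For the equivalence ``$ae_x$ is primitive in $\I$ iff $a$ is primitive in $R$,'' the paper argues both directions by hand: it shows directly that any idempotent $e$ with $e(ae_x)=(ae_x)e=e$ is supported at $(x,x)$ with $e(x,x)$ a sub-idempotent of $a$, and separately that a diagonal primitive $g$ equals $g(x,x)e_x$ with $g(x,x)$ primitive. Your corner-ring identification $(ae_x)\,\I\,(ae_x)\cong aRa$ via $(ae_x)g(ae_x)=a\,g(x,x)\,a\,e_x$ is really the same computation made conceptual, and it buys you both directions at once, since sub-idempotents of an idempotent $u$ are exactly the idempotents of $u\I u$ under the paper's definition of primitivity. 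For the uniqueness of $x$, the paper simply evaluates the two conjugation identities at $(x,x)$ using \cref{in_diagonal} and concludes $a=0$ if $x\neq y$; your route through $\I/J(\I)\cong(R/J(R))^n$ is valid (the diagonal-evaluation map is a surjective ring homomorphism with kernel $J(\I)$ by the corollary, conjugation passes to the quotient componentwise, and a nonzero idempotent cannot lie in $J(R)$), but the reduction modulo $J(R)$ is not actually needed: the unreduced diagonal evaluation $\I\to R^n$ already does the job, which is what the paper exploits. Your treatment of the uniqueness of $a$ coincides with the paper's.
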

\begin{proof}
If $f\in \I$ is a primitive idempotent, then $f$ is conjugate to a diagonal idempotent which is also primitive, since the conjugation is an automorphism. Therefore, it is sufficient to prove that the diagonal primitive idempotents of $\I$ are exactly the $ae_x$ with $x\in X$ and $a$ a primitive idempotent of $R$.

Let $a$ be a primitive idempotent of $R$ and let $x\in X$. Then $ae_x$ is a diagonal idempotent of $\I$, by \cref{diagonal_idempotent}. Let $e\in \I$ be an idempotent such that $e(ae_x)=(ae_x)e=e$. For $x_i\leq x_j$ in $X$ we have
\begin{align*}
e(x_i,x_j)=& (e(ae_x))(x_i,x_j)=\sum_{x_i\leq x_k\leq x_j}e(x_i,x_k)\sigma^{k-i}(ae_x(x_k,x_j))\\
  =& \begin{cases}
          e(x_i,x)\sigma^{j-i}(a), & \mbox{ if }x_j=x,\\
          0, & \mbox{ if }x_j\neq x
         \end{cases}
\end{align*}
and
\begin{align*}
e(x_i,x_j)=& ((ae_x)e)(x_i,x_j)=\sum_{x_i\leq x_k\leq x_j}ae_x(x_i,x_k)\sigma^{k-i}(e(x_k,x_j))\\
  =& \begin{cases}
          ae(x,x_j), & \mbox{ if }x_i=x,\\
          0, & \mbox{ if }x_i\neq x.
         \end{cases}
\end{align*}
Thus, $e(x_i,x_j)=0$ if $(x_i,x_j)\neq (x,x)$ and $e(x,x)=e(x,x)a=ae(x,x)$. Since $e(x,x)$ is an idempotent and $a$ is a primitive idempotent of $R$, then $e(x,x)=0$ or $e(x,x)=a$. Therefore, $e=0$ or $e=ae_x$ and so $ae_x$ is a primitive idempotent of $\I$.

Let $g\in\I$ be a diagonal primitive idempotent. Since $g\neq 0$, there is $x\in X$ such that $g(x,x)\neq 0$. Let $b\in R$ be an idempotent such that $g(x,x)b=bg(x,x)=b$. Since $g$ and $be_x$ are diagonal, then $g(be_x)$ and $(be_x)g$ are also diagonal. Moreover, $be_x$ is an idempotent, by \cref{diagonal_idempotent}. By \cref{in_diagonal},
$$(g(be_x))(x,x)=((be_x)g)(x,x)=b=(be_x)(x,x)$$
and for $u\neq x$,
$$(g(be_x))(u,u)=((be_x)g)(u,u)=0=(be_x)(u,u).$$
Thus, $g(be_x)=(be_x)g=be_x$ and, therefore, $be_x=0$ or $be_x=g$. So, $b=0$ or $b=g(x,x)$ and then $g(x,x)$ is a primitive idempotent of $R$. Consider the idempotent $g(x,x)e_x\in\I$. Then $g(g(x,x)e_x)=(g(x,x)e_x)g=g(x,x)e_x$, because $g$ and $g(x,x)e_x$ are diagonal and $g(x,x)$ is an idempotent. Since the idempotent $g$ is primitive and $g(x,x)\neq 0$, then $g(x,x)e_x=g$.

Now, suppose that a primitive idempotent $f\in\I$ is conjugate to $ae_x$ and $be_y$, where $x,y\in X$ and $a,b$ are primitive idempotents of $R$. Let $g,h$ be invertible elements of $\I$ such that $f=g(ae_x)g^{-1}=h(be_y)h^{-1}$. If $x\neq y$, then
$$f(x,x)=(h(be_y)h^{-1})(x,x)=h(x,x)be_y(x,x)h^{-1}(x,x)=0.$$
But, on the other hand,
$$f(x,x)=(g(ae_x)g^{-1})(x,x)=g(x,x)ae_x(x,x)g^{-1}(x,x)=g(x,x)ag(x,x)^{-1}.$$
Thus, $g(x,x)ag(x,x)^{-1}=0$ and, therefore, $a=0$, which is a contradiction. It follows that $f=g(ae_x)g^{-1}=h(be_x)h^{-1}$ and then
$$g(x,x)ag(x,x)^{-1}=h(x,x)bh(x,x)^{-1}.$$
Therefore, if $R$ is commutative, then $a=b$. If the only idempotents of $R$ are $0$ and $1$, then $a=1=b$.
\end{proof}

We finish this section by describing the center of $\I$. For any ring $S$, we will denote the center of $S$ by $Z(S)$.

\begin{prop}\label{center}
Let $f\in\I$. Then $f\in Z(\I)$ if, and only if, the following statements are true:
\begin{enumerate}
\item $f$ is diagonal.
\item $f(x,x)\in Z(R)$ for all $x\in X$.
\item $f(x_i,x_i)=\sigma^{j-i}(f(x_j,x_j))$ for all $x_i\leq x_j$.
\end{enumerate}
\end{prop}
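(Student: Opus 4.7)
The plan is to reduce centrality of $f$ to commutation with a small generating family, then read off each of (i)--(iii) from a corresponding test element. Since every $g\in\I$ decomposes as $g=\sum_{x_i\le x_j}g(x_i,x_j)e_{x_ix_j}$, and each summand factors as $(g(x_i,x_j)\delta)\,e_{x_ix_j}$, the element $f$ will lie in $Z(\I)$ if and only if it commutes with every $e_{x_ix_j}$ (for $x_i\le x_j$) and with every $r\delta$ (for $r\in R$).

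For the forward direction, assume $f\in Z(\I)$. First, commute $f$ with $e_x$ for each $x\in X$: using \cref{e_x-f-e_y} together with the identity $\delta=\sum_{v\in X}e_v$, one obtains $e_xf=\sum_{v\ge x}f(x,v)e_{xv}$ and $fe_x=\sum_{v\le x}f(v,x)e_{vx}$. Comparing coordinates in the basis $\{e_{uv}\}$ forces $f(x,v)=0$ for $v>x$ and $f(v,x)=0$ for $v<x$, which (ranging $x$ over $X$) gives (i). Second, with $f$ now known to be diagonal, a direct application of \cref{convolution} gives $((r\delta)f)(x_i,x_i)=rf(x_i,x_i)$ and $(f(r\delta))(x_i,x_i)=f(x_i,x_i)r$ for every $r\in R$, whence $f(x_i,x_i)\in Z(R)$, which is (ii). Third, for any $x_i\le x_j$, \cref{af_xye_xy-cdot-bt_uve_uv} combined with the diagonality of $f$ yields $f\cdot e_{x_ix_j}=f(x_i,x_i)e_{x_ix_j}$ while $e_{x_ix_j}\cdot f=\sigma^{j-i}(f(x_j,x_j))e_{x_ix_j}$; equating the two coefficients delivers (iii).

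The converse is the same computations run backwards: assuming (i)--(iii), the identities above show $(r\delta)f=f(r\delta)$ for every $r\in R$ and $e_{x_ix_j}f=fe_{x_ix_j}$ for every $x_i\le x_j$, so $f$ commutes with every element of the generating family and therefore with every element of $\I$. The only point requiring genuine care is the bookkeeping around the asymmetric powers $\sigma^{j-i}$ that \cref{convolution,af_xye_xy-cdot-bt_uve_uv} insert on one side of the product but not the other; this asymmetry is precisely what forces (iii) into its one-sided form, and once it is handled consistently the proof reduces to routine basis calculations.
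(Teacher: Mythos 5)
Your proof is correct and takes essentially the same route as the paper: both directions rest on testing $f$ against the same elements $e_x$, $r\delta$, and $e_{x_ix_j}$, using \cref{convolution,af_xye_xy-cdot-bt_uve_uv,e_x-f-e_y}. The only difference is organizational: in the converse the paper verifies $(gf)(x_i,x_j)=(fg)(x_i,x_j)$ directly for an arbitrary $g$ by expanding \cref{convolution}, whereas you reduce to the multiplicative generating family $\{r\delta : r\in R\}\cup\{e_{x_ix_j} : x_i\le x_j\}$ (valid, since the centralizer of a set is a subring and $(r\delta)e_{x_ix_j}=re_{x_ix_j}$); both computations are routine.
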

\begin{proof}
Let $f\in Z(\I)$. If $x_i<x_j$, then
$$(e_{x_i}f)(x_i,x_j)=\sum_{x_i\leq x_k\leq x_j}e_{x_i}(x_i,x_k)\sigma^{k-i}(f(x_k,x_j))=f(x_i,x_j)$$
and
$$(fe_{x_i})(x_i,x_j)=\sum_{x_i\leq x_k\leq x_j}f(x_i,x_k)\sigma^{k-i}(e_{x_i}(x_k,x_j))=0$$
Thus, $f(x_i,x_j)=0$ and, therefore, $f$ is diagonal. Given $a\in R$, consider $a\delta\in\I$. Since $((a\delta)f)(x,x)=af(x,x)$ and $(f(a\delta))(x,x)=f(x,x)a$, we have $af(x,x)=f(x,x)a$ and, therefore, $f(x,x)\in Z(R)$ for all $x\in X$. Let $x_i\leq x_j$. Then
$$(e_{x_ix_j}f)(x_i,x_j)=\sum_{x_i\leq x_k\leq x_j}e_{x_ix_j}(x_i,x_k)\sigma^{k-i}(f(x_k,x_j))=\sigma^{j-i}(f(x_j,x_j))$$
and
$$(fe_{x_ix_j})(x_i,x_j)=\sum_{x_i\leq x_k\leq x_j}f(x_i,x_k)\sigma^{k-i}(e_{x_ix_j}(x_k,x_j))=f(x_i,x_i),$$
therefore $f(x_i,x_i)=\sigma^{j-i}(f(x_j,x_j))$.

Conversely, let $f\in\I$ satisfying (i)-(iii) and let $g\in\I$ be an arbitrary element. For $x_i\leq x_j$ we have
\begin{align*}
(gf)(x_i,x_j)=& \sum_{x_i\leq x_k\leq x_j}g(x_i,x_k)\sigma^{k-i}(f(x_k,x_j))\\
             =& g(x_i,x_j)\sigma^{j-i}(f(x_j,x_j)) \text{ by (i)}\\
             =& g(x_i,x_j)f(x_i,x_i) \text{ by (iii)}\\
             =& f(x_i,x_i)g(x_i,x_j) \text{ by (ii)}\\
             =& \sum_{x_i\leq x_k\leq x_j}f(x_i,x_k)\sigma^{k-i}(g(x_k,x_j)) \text{ by (i)}\\
             =& (fg)(x_i,x_j).
\end{align*}
Therefore, $gf=fg$ and $f\in Z(\I)$.
\end{proof}

\begin{cor}
Let $f\in Z(\I)$ and let $x_i,x_j$ be elements in the same connected component of $X$. Then $\sigma^i(f(x_i,x_i))=\sigma^{j}(f(x_j,x_j))$.
\end{cor}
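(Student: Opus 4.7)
The plan is to deduce the corollary directly from property (iii) of \cref{center}, by first upgrading it to a symmetric statement about comparable pairs and then chaining along a zig-zag in the Hasse diagram.

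First I would observe that if $x_k\le x_l$, then \cref{center}(iii) gives $f(x_k,x_k)=\sigma^{l-k}(f(x_l,x_l))$. Applying $\sigma^k$ to both sides yields
\begin{align*}
\sigma^k(f(x_k,x_k))=\sigma^k\bigl(\sigma^{l-k}(f(x_l,x_l))\bigr)=\sigma^l(f(x_l,x_l)).
\end{align*}
Thus $\sigma^k(f(x_k,x_k))=\sigma^l(f(x_l,x_l))$ whenever $x_k$ and $x_l$ are comparable (in either direction), which is the key symmetric identity.

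Next, since $x_i$ and $x_j$ lie in the same connected component of $X$ (viewed as the undirected graph whose edges are the comparable pairs), there exists a sequence $x_i=y_0,y_1,\ldots,y_m=x_j$ of elements of $X$ such that $y_{t}$ and $y_{t+1}$ are comparable for each $0\le t<m$. Writing $y_t=x_{k_t}$, the previous step gives $\sigma^{k_t}(f(y_t,y_t))=\sigma^{k_{t+1}}(f(y_{t+1},y_{t+1}))$ for every $t$. Chaining these equalities from $t=0$ to $t=m-1$ yields $\sigma^i(f(x_i,x_i))=\sigma^j(f(x_j,x_j))$, as required.

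There is no real obstacle here: the only subtlety is that \cref{center}(iii) is stated only for $x_i\le x_j$, so one has to rewrite it as the symmetric equality $\sigma^k(f(x_k,x_k))=\sigma^l(f(x_l,x_l))$ before invoking connectedness; once that is done, a routine induction on the length of a connecting zig-zag finishes the proof.
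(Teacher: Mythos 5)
Your proof is correct and follows essentially the same route as the paper: both arguments symmetrize \cref{center}(iii) by applying a power of $\sigma$ to obtain $\sigma^k(f(x_k,x_k))=\sigma^l(f(x_l,x_l))$ for comparable pairs, and then induct along a zig-zag chain connecting $x_i$ to $x_j$.
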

\begin{proof}
By the hypothesis, there are a positive integer $m$ and $x_i=u_0, u_1,\ldots, u_m=x_j$ elements of $X$ such that $u_l\leq u_{l+1}$ or $u_{l+i}\leq u_l$ for $l=0,\ldots,m-1$. We will prove that $\sigma^i(f(x_i,x_i))=\sigma^{j}(f(x_j,x_j))$ by induction on $m$. If $m=1$, then $x_i\leq x_j$ or $x_j\leq x_i$ and, by \cref{center}, $f(x_i,x_i)=\sigma^{j-i}(f(x_j,x_j))$ or $f(x_j,x_j)=\sigma^{i-j}(f(x_i,x_i))$. Therefore, $\sigma^i(f(x_i,x_i))=\sigma^{j}(f(x_j,x_j))$. Assume that $m\geq 2$ and the result is known for $m-1$. Thus, if $u_{m-1}=x_r$, we have $\sigma^i(f(x_i,x_i))=\sigma^{r}(f(x_r,x_r))$. Moreover, by the $m=1$ case, $\sigma^{r}(f(x_r,x_r))=\sigma^{j}(f(x_j,x_j))$. Therefore, $\sigma^i(f(x_i,x_i))=\sigma^{j}(f(x_j,x_j))$.
\end{proof}

\section{The isomorphism problem}\label{sec-isomorphism}

We recall that an \emph{isomorphism} from a poset $(P,\leq)$ onto a poset $(Q,\preceq)$ is a bijective map $\lambda: P\to Q$ that satisfies
$$x\leq y \Leftrightarrow \lambda(x)\preceq\lambda(y).$$

\begin{prop}
Let $(Y,\preceq)$ be a poset and let $S$ be a ring with an endomorphism $\tau$. Suppose there are an isomorphism of posets $\alpha:X\to Y$ and an isomorphism of rings $\varphi: R\to S$ such that $\varphi\sigma=\tau\varphi$. Let $y_i:=\alpha(x_i)$ for each $i=1,\ldots,n$. Then $\psi: \I\to I(Y,S,\tau)$ defined by
$$\psi(f)(y_i,y_j)=\varphi(f(x_i,x_j)),$$
for all $i,j=1,\ldots,n$, is an isomorphism of rings.
\end{prop}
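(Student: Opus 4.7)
The plan is to check, in order, (a) that $\psi(f)$ genuinely belongs to $I(Y,S,\tau)$ for each $f\in\I$, (b) that $\psi$ is additive, bijective, and sends $\delta$ to $\delta$, and (c) that $\psi$ preserves multiplication. The only nontrivial algebraic input from the hypothesis will be the iterated form $\varphi\sigma^m=\tau^m\varphi$ of the relation $\varphi\sigma=\tau\varphi$, obtained by a one-line induction on $m\ge 0$.

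For (a) the key observation is that the enumeration $y_i:=\alpha(x_i)$ of $Y$ is compatible with the convention underlying the definition of $I(Y,S,\tau)$: since $\alpha$ is an order isomorphism, $y_i\preceq y_j$ is equivalent to $x_i\leq x_j$, which forces $i\leq j$ by the analogous hypothesis on the enumeration of $X$. In particular the convolution formula for $I(Y,S,\tau)$ uses precisely the exponents $k-i$. The same equivalence also shows that $\psi(f)(y_i,y_j)=0$ whenever $y_i\not\preceq y_j$. For (b), additivity and the image of $\delta$ are immediate from $\varphi$ being a ring homomorphism, and bijectivity is obtained by running the same recipe with $(\alpha^{-1},\varphi^{-1})$ in place of $(\alpha,\varphi)$; the compatibility $\varphi^{-1}\tau=\sigma\varphi^{-1}$ needed for this inverse is just the original relation rearranged.

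Step (c) is the substance of the verification but reduces to a short chain of equalities once the preparatory matches are in hand. Starting from the convolution on $Y$,
\begin{align*}
(\psi(f)\psi(g))(y_i,y_j) &= \sum_{y_i\preceq y_k\preceq y_j}\psi(f)(y_i,y_k)\,\tau^{k-i}\bigl(\psi(g)(y_k,y_j)\bigr),
\end{align*}
I would rewrite the range of summation using $y_i\preceq y_k\preceq y_j\Leftrightarrow x_i\leq x_k\leq x_j$, unfold the definition of $\psi$, pull $\varphi$ outside via $\tau^{k-i}\varphi=\varphi\sigma^{k-i}$, and recognise the resulting inner expression as $(fg)(x_i,x_j)$, giving $\psi(fg)(y_i,y_j)$. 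The only point deserving any care, and the only conceivable obstacle, is ensuring that the exponents of $\tau$ appearing in the convolution on $Y$ really match the exponents of $\sigma$ on $X$; this is precisely what the compatibility of the two enumerations established in (a) supplies.
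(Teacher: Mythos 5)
Your proposal is correct and takes essentially the same route as the paper: well-definedness from the order isomorphism, multiplicativity via the iterated relation $\varphi\sigma^{k-i}=\tau^{k-i}\varphi$ applied term by term to the convolution, and bijectivity by exhibiting the explicit inverse built from $(\alpha^{-1},\varphi^{-1})$. Your explicit check that the induced labeling $y_i=\alpha(x_i)$ satisfies the convention $y_i\preceq y_j\Rightarrow i\leq j$ (so that the exponents of $\tau$ match those of $\sigma$) is a point the paper leaves implicit, but the argument is otherwise the same.
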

\begin{proof}
Note that if $y_i,y_j\in Y$ and $y_i\npreceq y_j$, then $\alpha(x_i)\npreceq \alpha(x_j)$ and, therefore, $x_i\nleq x_j$. Thus, for each $f\in \I$, $\psi(f)(y_i,y_j)=\varphi(f(x_i,x_j))=\varphi(0)=0$ and so $\psi(f)\in I(Y,S,\tau)$.

Let $f,g\in \I$. It is easy to see that $\psi(f+g)=\psi(f)+\psi(g)$ and $\psi(\delta)=\delta$. Let $y_i\preceq y_j$ in $Y$. We have
\begin{align*}
\psi(fg)(y_i,y_j)=& \varphi((fg)(x_i,x_j))=\varphi\left(\sum_{x_i\leq x_k\leq x_j}f(x_i,x_k)\sigma^{k-i}(g(x_k,x_j))\right)\\
                 =& \sum_{x_i\leq x_k\leq x_j}\varphi(f(x_i,x_k))\varphi(\sigma^{k-i}(g(x_k,x_j)))\\
                 =& \sum_{x_i\leq x_k\leq x_j}\varphi(f(x_i,x_k))\tau^{k-i}(\varphi(g(x_k,x_j)))\\
                 =& \sum_{y_i\preceq y_k\preceq y_j}\psi(f)(y_i,y_k)\tau^{k-i}(\psi(g)(y_k,y_j))\\
                 =& (\psi(f)\psi(g))(y_i,y_j).
\end{align*}
Thus $\psi(fg)=\psi(f)\psi(g)$ and, therefore, $\psi$ is a homomorphism.

Now, let $\eta: I(Y,S,\tau)\to \I$ defined by $\eta(h)(x_i,x_j)=\varphi^{-1}(h(y_i,y_j))$, for all $i,j=1,\ldots,n$. We have
$\eta\circ \psi=Id_{\I}$ and $\psi\circ\eta=Id_{I(Y,S,\tau)}$. Thus, $\psi$ is an isomorphism with $\psi^{-1}=\eta$.
\end{proof}

To prove our main result, we need the following lemma.

\begin{lem}
Let $x,y\in X$. Then
$$x\leq y \Leftrightarrow e_x\I e_y\neq \{0\}.$$
\end{lem}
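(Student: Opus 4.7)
The plan is to prove both implications directly from the displayed identities \cref{e_xye_zw,e_x-f-e_y} already established in the excerpt, so essentially no new machinery is required.

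For the forward implication, suppose $x \le y$. Then $e_{xy}$ is a well-defined element of $\I$. I would compute $e_x e_{xy} e_y$ using \cref{e_xye_zw}: since $e_x = e_{xx}$ and $e_y = e_{yy}$, we get $e_{xx} e_{xy} = \delta_{xx} e_{xy} = e_{xy}$ and then $e_{xy} e_{yy} = \delta_{yy} e_{xy} = e_{xy}$. Hence $e_x e_{xy} e_y = e_{xy} \neq 0$, so $e_{xy} \in e_x \I e_y$ and this set contains a nonzero element.

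For the backward implication, I would prove the contrapositive: if $x \not\le y$, then by \cref{e_x-f-e_y} we have $e_x f e_y = 0$ for every $f \in \I$, hence $e_x \I e_y = \{0\}$.

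I do not anticipate any obstacle; both directions are immediate consequences of the two identities already cited, and the main content of the lemma is just to isolate this equivalence for use in the isomorphism theorem of \cref{sec-isomorphism}, where $e_x \I e_y$ will presumably serve as a ring-theoretic proxy for the order relation $x \le y$ that can be transported across a ring isomorphism.
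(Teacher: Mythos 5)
Your proposal is correct and follows essentially the same route as the paper: the forward direction via $e_x e_{xy} e_y = e_{xy}$ using \cref{e_xye_zw}, and the backward direction via \cref{e_x-f-e_y}. No issues.
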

\begin{proof}
Suppose that $x\leq y$  and consider $e_{xy}\in \I$. By \cref{e_xye_zw}, $e_xe_{xy}e_y=e_{xy}$ and then $e_x\I e_y\neq \{0\}$. On the other hand, if $e_x\I e_y\neq \{0\}$, there is $f\in\I$ such that $e_xfe_y\neq \{0\}$. Thus, by \cref{e_x-f-e_y}, $x\leq y$.
\end{proof}

\begin{thrm}
Let $(Y,\preceq)$ be a poset and let $S$ be a ring with an endomorphism $\tau$. Suppose there is an isomorphism $\phi:\I\to I(Y,S,\tau)$.
\begin{enumerate}
\item If the only idempotents of $R$ and $S$ are the trivial ones, then $X\cong Y$.
\item If $\phi(R\delta)=S\delta$, then $R\cong S$.
\end{enumerate}
\end{thrm}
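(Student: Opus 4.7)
For part (i), the plan is to transport the canonical orthogonal decomposition $\delta=\sum_{i=1}^n e_{x_i}$ across $\phi$ and extract a poset bijection $X\to Y$ from the image. Since $R$ and $S$ have only trivial idempotents, $1$ is the unique nonzero primitive idempotent in each, so \cref{primitive} identifies the primitive idempotents of $\I$ (respectively $I(Y,S,\tau)$) as exactly the conjugates of the $e_x$, $x\in X$ (respectively the $e_y$, $y\in Y$), with the corresponding element of the poset being unique. Ring isomorphisms carry primitive idempotents to primitive idempotents and preserve orthogonality, so after labeling $Y=\{y_1,\ldots,y_m\}$ I can define a function $\alpha:\{1,\ldots,n\}\to\{1,\ldots,m\}$ by the rule $\phi(e_{x_i})=u_i e_{y_{\alpha(i)}}u_i^{-1}$ for suitable units $u_i\in I(Y,S,\tau)$.

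The main step will be to show that $\alpha$ is injective. If $\alpha(i)=\alpha(j)=k$ with $i\neq j$, then the orthogonality $\phi(e_{x_i})\phi(e_{x_j})=0$ rearranges to $e_{y_k}we_{y_k}=0$, where $w:=u_i^{-1}u_j$ is a unit of $I(Y,S,\tau)$; \cref{e_x-f-e_y} then forces $w(y_k,y_k)=0$, which contradicts \cref{invertible}. Hence $n\le m$, and running the same argument with $\phi^{-1}$ in place of $\phi$ will give $n=m$, making $\alpha$ a bijection and producing $\lambda:X\to Y$ via $x_i\mapsto y_{\alpha(i)}$.

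To verify that $\lambda$ is an order isomorphism, I will invoke the preceding lemma $u\le v\iff e_u\I e_v\neq\{0\}$ (and its analogue in $I(Y,S,\tau)$) together with the observation that conjugation by units does not affect whether the two-sided expression $aTb$ vanishes: $\phi(e_{x_i})I(Y,S,\tau)\phi(e_{x_j})=u_i e_{y_{\alpha(i)}}I(Y,S,\tau)e_{y_{\alpha(j)}}u_j^{-1}$ is nonzero iff $e_{y_{\alpha(i)}}I(Y,S,\tau)e_{y_{\alpha(j)}}\neq\{0\}$, so applying $\phi$ transports the order relation to the desired equivalence $x_i\le x_j\iff y_{\alpha(i)}\preceq y_{\alpha(j)}$.

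Part (ii) is then immediate: the hypothesis $\phi(R\delta)=S\delta$ makes $\phi|_{R\delta}:R\delta\to S\delta$ a ring isomorphism, while the maps $R\to R\delta$, $r\mapsto r\delta$, and $S\to S\delta$, $s\mapsto s\delta$, are ring isomorphisms by \cref{in_diagonal}, so concatenating the three produces $R\cong S$. I expect the crux of the whole proof to be the injectivity of $\alpha$, where the interplay of \cref{invertible,e_x-f-e_y} must be set up carefully; the order-isomorphism property of $\lambda$ and part (ii) are essentially formal consequences.
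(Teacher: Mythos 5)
Your proposal is correct and follows essentially the same route as the paper: both define $\alpha$ by sending $x$ to the unique $y$ such that $\phi(e_x)$ is conjugate to $e_y$ (via \cref{primitive}) and then transport the order relation through the preceding lemma $x\leq u \Leftrightarrow e_x\I e_u\neq\{0\}$, with part (ii) dismissed as formal. The only divergence is minor: the paper obtains injectivity from the uniqueness clause of \cref{primitive} (showing $e_x$ and $e_u$ become conjugate in $\I$) and surjectivity by applying \cref{primitive} to $\phi^{-1}(e_y)$, whereas you use orthogonality of the $\phi(e_{x_i})$ together with \cref{invertible} and a cardinality count; both arguments are valid.
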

\begin{proof}
(i) Given $x\in X$ and $y\in Y$, we denote the elements $e_x\in \I$ and $e_y\in I(Y,S,\tau)$ by $e_x^X$ and $e_y^Y$, respectively. For each $x\in X$, $e_x^X$ is a primitive idempotent of $\I$, by \cref{primitive}. Thus $\phi(e_x^X)$ is a primitive idempotent of $I(Y,S,\tau)$ and, therefore, there is only one $y\in Y$ such that $\phi(e_x^X)$ is conjugate to $e_y^Y$, by \cref{primitive}. It follows that $\phi$ induces a map $\alpha: X\to Y$ such that, for each $x\in X$, $\phi(e_x^X)$ is conjugate to $e_{\alpha(x)}^Y$.

Let $x,u\in X$ such that $\alpha(x)=\alpha(u)$. If $f,g\in I(Y,S,\tau)$ are such that $\phi(e_x^X)=fe_{\alpha(x)}^Yf^{-1}$ and $\phi(e_u^X)=ge_{\alpha(u)}^Yg^{-1}$, then
\begin{align*}
e_x^X=& \phi^{-1}(f)\phi^{-1}(e_{\alpha(x)}^Y)\phi^{-1}(f)^{-1}=\phi^{-1}(f)\phi^{-1}(e_{\alpha(u)}^Y)\phi^{-1}(f)^{-1}\\
     =& \phi^{-1}(f)\phi^{-1}(g)^{-1}e_u^X\phi^{-1}(g)\phi^{-1}(f)^{-1}\\
     =& [\phi^{-1}(f)\phi^{-1}(g)^{-1}]e_u^X[\phi^{-1}(f)\phi^{-1}(g)^{-1}]^{-1}.
\end{align*}
By \cref{primitive}, $x=y$ and, therefore, $\alpha$ is injective. Now, given $y\in Y$, consider the primitive idempotent $e_y^Y\in I(Y,S,\tau)$. Then $\phi^{-1}(e_y^Y)$ is a primitive idempotent of $\I$. By \cref{primitive}, there is $x\in X$ such that $\phi^{-1}(e_y^Y)$ is conjugate to $e_x^X$. Thus $\phi(e_x^X)$ is conjugate to $e_y^Y$ and, therefore, $\alpha(x)=y$. So, $\alpha$ is surjective.

Finally, let $x,u\in X$ such that $x\leq u$, and let $f,g\in I(Y,S,\tau)$ such that $\phi(e_x^X)=fe_{\alpha(x)}^Yf^{-1}$ and $\phi(e_u^X)=ge_{\alpha(u)}^Yg^{-1}$. By lemma above, $e_x^X\I e_u^X\neq \{0\}$ and, therefore,
\begin{align*}
\{0\}\neq \phi(e_x^X\I e_u^X)=& \phi(e_x^X)\phi(\I)\phi(e_u^X)\\
                         =& fe_{\alpha(x)}^Yf^{-1}I(Y,S,\tau)ge_{\alpha(u)}^Yg^{-1}\\
                         =& fe_{\alpha(x)}^YI(Y,S,\tau)e_{\alpha(u)}^Yg^{-1}.
\end{align*}
Thus, $e_{\alpha(x)}^Y I(Y,S,\tau)e_{\alpha(u)}^Y\neq \{0\}$ and, therefore, $\alpha(x)\leq\alpha(u)$, by lemma above. Analogously, if $\alpha(x)\leq\alpha(u)$ then $x\leq u$.

(ii) Obvious.
\end{proof}


\end{document}